\def\sym{\fam\comfam\com}
\font\tensym=msbm10 at 12pt
\font\sevensym=msbm7
\font\fivesym=msbm5
\def\sym{\fam\symfam\relax}
\newcommand{\ds}{\displaystyle}
\def\Z{{\sym Z}}
\def\R{{\sym R}}
\def\C{{\sym C}}
\def \T{{\sym T}}
\def \P{{\sym P}}
\newtheorem{thm}{Theorem}[section]
\newtheorem{lemm}[thm]{Lemma}
\newtheorem{defn}[thm]{Definition}
\newtheorem{que}[thm]{Question}
\newtheorem{Cor}[thm]{Corollary}
\newenvironment{proof}{{\bf Proof:  }}{\hfill\rule{2mm}{2mm}}
\date{ 19 November 1999}
\author{\MakeLowercase{e.} H. \MakeLowercase{el} A\MakeLowercase{BDALAOUI} \\
\footnotesize Department of Mathematics, University of Rouen Normandy \\
\footnotesize CNRS, LMRS UMR 60 85,\\
\footnotesize Saint-Etienne de Rouvray \\
\footnotesize e-mail : elHoucein.elAbdalaoui@univ-rouen.fr\\
}
\title{ van der Corput inequality for real line  and Wiener-Wintner theorem  for amenable groups.}
\begin{document}

\maketitle
 
{\bf Abstract. }{ We extend the classical van der Corput inequality  to the real line. As a consequence, we obtain a simple proof of the Wiener-Wintner theorem for the $\R$-action which assert that for any family of maps $(T_t)_{t \in \R}$  acting on
the Lebesgue measure space $(\Omega,{\cal {A}},\mu)$ where $\mu$ is a probability measure and for any $t\in \R$, $T_t$ is measure-preserving transformation on measure space $(\Omega,{\cal {A}},\mu)$ with
 $T_t \circ T_s =T_{t+s}$, for any $t,s\in \R$. Then, for any 
$f \in L^1(\mu)$, there is a a single null set  off which
$\displaystyle \lim_{T \rightarrow +\infty} \frac1{T}\int_{0}^{T} f(T_t\omega) e^{2 i \pi \theta t} dt$
exists for all $\theta \in \R$. We further present the joining proof of the amenable group version of Wiener-Wintner theorem due to Weiss and Ornstein.}

\section{Introduction}
\noindent{}In this paper, using our generalization of van der Corput inequality for the real line, we present a simple proof of  Wiener-Wintner theorem for the flows. We further present the joining proof of the amenable groups version of it due to Ornstein and Weiss \cite{OW}. This accomplished by applying the Furstenberg joinings machinery. The classical Wiener-Wintner theorem \cite{W-W} assert the following. \\

\noindent{\bf Theorem.} {\it Let $(\Omega,{\cal {A}},\mu,T)$ be a dynamical
system where $\mu$ is a probability measure. Then, for any $f$ in $L^1(\mu)$, There is a
set $\Omega'$ of full measure such that for any $\omega \in \Omega'$ the sums
$$\frac{1}{N}\sum_{0}^{N-1} f(T^n\omega)z^n$$
converge for all $z$ in the unit circle $C=\{z \in {\C}~:~|z|=1\}$}.\\ 

The original proof can be found in \cite{W-W}. 
Subsequently, Furstenberg in \cite{F1} obtain a joining proof of Wiener-Wintner 
theorem. Later, I. Assani \cite{A}, A.${~}$Below \& V. Losert \cite{B-L} proved the stronger version  of this theorem. This stronger version is due to Bourgain \cite{B}. Theirs proofs is based on the Hellinger integral 
(Known also as affinity principale). In \cite{L1}, E. Lesigne generalize Wiener-Wintner theorem 
to the polynomial case.  His proof is based on the Furstenberg's joinings 
technique. Afterwards, in \cite{L2}, using van${~}$der${~}$Corput inequality and the spectral theory of skew products, 
he extended the stronger version of polynomial Wiener-Wintner 
theorem to the case of weak-wixing dynamical systems\footnote{Seven year after this note was written ,  M. Lacey and  E. Terwilleger \cite{LT} produce an oscillation proof of the Hilbert version of Wiener-Wintner theorem.}. 
 \\

In this paper, we extend van der Corput inequality to the continuous time and we give a simple proof of the flow version of Wiener-Wintner theorem. We further present the Ornstein-Weiss's joining of the amenable group version of this fundamental theorem in ergodic theory.  The proof is based on Furstenberg's joinings machinery combined with the recent result of E. Lindenstrauss \cite{Li}. \\

The plan of the paper is as follows. In Section \ref{vdCR}, we state and prove the continuous van der Corput inequality and the flow version of Wiener-Wintner theorem. In section \ref{Jww}, we state and prove the amenable group version of Wiener-Wintner theorem.

\section{van der Corput for real line} \label{vdCR}
In this section, we state our first main result.\\

\begin{thm}[{\bf {Theorem (van der Corput).}}]\label{vdrc1}
 Let $(u(t))_{t \in [0,T]}$ be an integrable complex valued function and let $S \in (0,T]$. Then 
\begin{eqnarray}
\Big|\displaystyle \int_{0}^{T}u(t) dt\Big|^2 \leq 
\frac{S+T}{S^2}\displaystyle\int_{0}^{S} \displaystyle\int_{0}^{S} \int_{0}^{T}u(t+s'-s) \overline{u}(t) ds ds' dt.
\end{eqnarray}
\end{thm}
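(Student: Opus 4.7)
The plan is to adapt the classical discrete van der Corput argument to the continuous setting. First, I would extend $u$ by zero outside $[0,T]$ and keep this extension implicit throughout. For each fixed $s \in [0,S]$, the translation $\tau = t+s$ gives $\int_0^T u(t)\,dt = \int_s^{T+s} u(\tau-s)\,d\tau = \int_0^{T+S} u(\tau-s)\,d\tau$, the last equality holding because the extended $u$ vanishes off $[0,T]$. Averaging over $s \in [0,S]$ and applying Fubini produces
$$S\int_0^T u(t)\,dt = \int_0^{T+S}\Big(\int_0^S u(\tau-s)\,ds\Big)d\tau.$$

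The second step is to apply the Cauchy--Schwarz inequality on $[0,T+S]$, viewing the previous display as the pairing of the constant function $1$ with $\tau \mapsto \int_0^S u(\tau-s)\,ds$. This gives
$$S^2\Big|\int_0^T u(t)\,dt\Big|^2 \leq (T+S)\int_0^{T+S}\Big|\int_0^S u(\tau-s)\,ds\Big|^2 d\tau.$$
Expanding the modulus squared as a double integral over $(s,s') \in [0,S]^2$ and swapping orders by Fubini, the remaining task is to compute, for fixed $s,s'$, the inner integral $\int_0^{T+S} u(\tau-s)\overline{u(\tau-s')}\,d\tau$.

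Third, the substitution $r = \tau - s'$ transforms this inner integral into $\int_{-s'}^{T+S-s'} u(r+s'-s)\overline{u(r)}\,dr$; since $\overline{u(r)}$ vanishes outside $[0,T] \subseteq [-s', T+S-s']$, the effective range of $r$ is $[0,T]$, yielding exactly $\int_0^T u(t+s'-s)\overline{u(t)}\,dt$. Reassembling the three steps delivers the asserted inequality.

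The only delicate point is the bookkeeping of the support of $u$ after the translations: one must interpret $u(t+s'-s)$ on the right-hand side as $0$ whenever $t+s'-s$ falls outside $[0,T]$, and verify that each change of variables is consistent with this convention. A secondary check is that the double integral in $(s,s')$ is real (which is automatic since the outer $L^2$-norm from Cauchy--Schwarz is non-negative), so that absolute values are unnecessary on the right-hand side. Once these conventions are in place, the rest is routine manipulation.
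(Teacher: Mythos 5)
Your proposal is correct and follows essentially the same route as the paper: extend $u$ by zero, write $S\int_0^T u$ as a double integral over $[0,T+S]\times[0,S]$, apply Cauchy--Schwarz against the constant function $1$, and expand the square as a double integral in $(s,s')$ with a change of variables. Your explicit attention to the support convention for $u(t+s'-s)$ is a point the paper leaves implicit, but it is the same argument.
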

\begin{proof}We start by noticing that we have
\begin{eqnarray}
	S\ds \int_{0}^{T}u(t) dt= \ds \int_{0}^{T+S}\ds \int_{0}^{S}\tilde{u}(t-s) ds dt,
\end{eqnarray}
where $\tilde{u}$ stand for 
$$
\tilde{u}(t)=\left\{\begin{array}{lll}
0  &\textrm{if}\;  t \leq 0,\\
u(t)  &\textrm{if} \;0 \leq  t \leq T,\\
0   &\textrm{if~not.} 
\end{array}
\right.
$$
Indeed, we have
\begin{eqnarray}
\int_{0}^{T+S}\int_{0}^{S}\tilde{u}(t-s) ds dt&=\ds \int_{0}^{S}\ds \int_{-s}^{T+S-s} \tilde{u}(t) dt ds\\
&=\ds \int_{0}^{S}\ds \int_{0}^{T} u(t) dt ds\\
&=S\ds \int_{0}^{T} u(t) dt.
\end{eqnarray}
Whence, 
\begin{eqnarray}
S^2\Big|\ds \int_{0}^{T}u(t) dt\Big|^2= \Big|\ds \int_{0}^{T+S}\ds \int_{0}^{S}\tilde{u}(t-s) ds dt\Big|^2.
\end{eqnarray}
Now, applying Cauchy-Schwarz inequality, we obtain
\begin{eqnarray}
S^2\Big|\ds \int_{0}^{T}u(t) dt\Big|^2 
\leq (T+S) \Big(\int_{0}^{T+S}\Big|\ds \int_{0}^{S}\tilde{u}(t-s) ds\Big|^2 dt\Big).
\end{eqnarray}
But 
\begin{eqnarray}
	\Big|\ds \int_{0}^{S}\tilde{u}(t-s) ds\Big|^2 &=
	\ds \int_{0}^{S} \tilde{u}(t-s) \overline{\tilde{u}}(t-s') ds ds'\\
&=\ds \int_{0}^{S} \tilde{u}(t-s) \overline{\tilde{u}}(t-s') ds ds'\\
&=\ds \int_{0}^{S} \tilde{u}(t+s'-s) \overline{\tilde{u}}(t) ds ds'
\end{eqnarray}
Whence
\begin{eqnarray}
\Big|\displaystyle \int_{0}^{T}u(t) dt\Big|^2 \leq 
\frac{S+T}{S^2}\displaystyle\int_{0}^{S} \displaystyle\int_{0}^{S} \int_{0}^{T}u(t+s'-s) \overline{u}(t) ds ds' dt.
\end{eqnarray}
This achieve the proof of the theorem. 
\end{proof}
\begin{thm}[Limit version of continuous van der Corput theorem.]\label{vdrc2}  Let $(u(t))_{t \in \R}$ be a bounded complex valued function. Then 
\begin{eqnarray}
	&\ds \limsup_{T \rightarrow \infty}\Big|\frac{1}{T}\ds\int_{0}^{T}u(t) dt \Big|^2 \nonumber\\
	&\leq 
	\ds \limsup_{S \rightarrow \infty}\frac{1}{S^2}\ds \int_{0}^{S}\ds \int_{0}^{S} 
		\ds \limsup_{T \rightarrow \infty} \ds \frac{1}{T}\int_{0}^{T}u(t+s'-s) \overline{u}(t)  ds ds' dt.
\end{eqnarray}
\end{thm}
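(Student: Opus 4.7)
The plan is to deduce the limit version directly from Theorem \ref{vdrc1} by dividing through by $T^2$ and taking successive $\limsup$s, using the boundedness of $u$ to pull a limsup inside the integrals over $[0,S]\times[0,S]$.

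First, I apply the finite van der Corput inequality from Theorem \ref{vdrc1} to the function $u$ restricted to $[0,T]$, and divide both sides by $T^2$ to obtain
\begin{eqnarray*}
\Bigl|\frac{1}{T}\int_0^T u(t)\,dt\Bigr|^2
\leq \frac{S+T}{T}\cdot\frac{1}{S^2}\int_0^S\!\!\int_0^S \frac{1}{T}\int_0^T u(t+s'-s)\overline{u}(t)\,dt\,ds\,ds'.
\end{eqnarray*}
The factor $(S+T)/T$ tends to $1$ as $T\to\infty$, so taking $\limsup_{T\to\infty}$ on both sides gives
\begin{eqnarray*}
\limsup_{T\to\infty}\Bigl|\frac{1}{T}\int_0^T u(t)\,dt\Bigr|^2
\leq \limsup_{T\to\infty}\frac{1}{S^2}\int_0^S\!\!\int_0^S \frac{1}{T}\int_0^T u(t+s'-s)\overline{u}(t)\,dt\,ds\,ds'.
\end{eqnarray*}

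The main step is to commute the outer $\limsup_{T\to\infty}$ with the double integral over $[0,S]^2$. Since $u$ is bounded, say $|u|\leq M$, the inner expression $\frac{1}{T}\int_0^T u(t+s'-s)\overline{u}(t)\,dt$ is uniformly dominated by $M^2$ on the compact square $[0,S]\times[0,S]$. Hence the reverse Fatou lemma applies and yields
\begin{eqnarray*}
\limsup_{T\to\infty}\int_0^S\!\!\int_0^S \frac{1}{T}\int_0^T u(t+s'-s)\overline{u}(t)\,dt\,ds\,ds'
\leq \int_0^S\!\!\int_0^S \limsup_{T\to\infty}\frac{1}{T}\int_0^T u(t+s'-s)\overline{u}(t)\,dt\,ds\,ds'.
\end{eqnarray*}

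Combining these two displays, the left-hand side is independent of $S$, so taking $\limsup_{S\to\infty}$ on the right yields the desired inequality. The only genuinely non-routine point is the interchange of limsup and integral; if one were uncomfortable invoking reverse Fatou directly, one could instead cover $[0,S]^2$ by finitely many small squares on which the map $(s,s')\mapsto \limsup_T\frac{1}{T}\int_0^T u(t+s'-s)\overline{u}(t)\,dt$ is nearly constant (this function is continuous in $(s,s')$ since translation is continuous in $L^\infty$-weak-$*$ when $u$ is bounded), but the dominated-limsup argument is cleaner and self-contained.
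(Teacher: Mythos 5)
Your argument is correct and is exactly the routine deduction the paper has in mind --- its own proof of Theorem \ref{vdrc2} is the single line ``Straightforward from Theorem \ref{vdrc1}'', and your normalization by $T^2$, the observation that $(S+T)/T\to 1$ for fixed $S$, and the reverse Fatou (dominated-limsup) step justified by the uniform bound $M^2$ on the finite-measure square $[0,S]^2$ are precisely the details being suppressed (as in the paper's statement itself, the complex-valued inner quantity should strictly be handled by passing to real parts or absolute values before invoking Fatou). The one caveat is your parenthetical fallback: a $\limsup$ over $T$ of functions continuous in $(s,s')$ need not itself be continuous, so that alternative would not work as stated --- but since your main argument does not rely on it, nothing is lost.
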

\begin{proof}Straightforward from Theorem \ref{vdrc1}.
\end{proof}\\

\noindent Now, let us state the continuous version of Wiener-Wintner theorem.

\begin{thm}[{\bf {Continuous version of  Wiener-Wintner theorem.}}] \label{WWt}
	
	Let $(T_t)_{t \in \R}$ be a maps acting on the Lebesgue measure space $(\Omega,{\cal {A}},\mu)$ where $\mu$ is a probability measure and for any $t\in \R$, $T_t$ is measure-preserving transformation on measure space $(\Omega,{\cal {A}},\mu)$ with
	$T_t \circ T_s =T_{t+s}$, for any $t,s \in \R$. Then, for any 
	$f \in L^1(\mu)$, there is a a single null set off which
	$\displaystyle \lim_{T \rightarrow +\infty} \frac1{T}\int_{0}^{T} f(T_t\omega) e^{2 i \pi \theta t} dt$
	exists for all $\theta \in \R$. 
\end{thm}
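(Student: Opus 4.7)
\textbf{Proof proposal for Theorem \ref{WWt}.} The plan is to combine the standard Kronecker/orthogonal-complement decomposition with Theorem \ref{vdrc2}. First, using the Banach principle together with the dominated ergodic theorem (applied to $\theta=0$), I would reduce the statement to $f$ in a countable dense subset of $L^2(\mu)\cap L^\infty(\mu)$; more specifically, it suffices to prove that for each such $f$ there is a single null set off which $\frac{1}{T}\int_0^T f(T_t\omega)e^{2i\pi\theta t}dt$ converges for every $\theta\in\R$. Let $\mathcal{K}\subset L^2(\mu)$ denote the closure of the span of the $L^2$-eigenfunctions of the unitary flow $U_t f(\omega)=f(T_t\omega)$, and split $f=f_{\mathcal{K}}+f_\perp$ with $f_\perp\perp\mathcal{K}$. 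The spectral measure $\sigma_{f_\perp}$ of $f_\perp$ is then continuous (non-atomic).

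For the Kronecker part, each eigenfunction $\phi$ with $U_t\phi=e^{2i\pi\lambda t}\phi$ satisfies $\frac{1}{T}\int_0^T\phi(T_t\omega)e^{2i\pi\theta t}dt=\phi(\omega)\cdot\frac{1}{T}\int_0^T e^{2i\pi(\lambda+\theta)t}dt$, which tends pointwise (in $\omega$) to $\phi(\omega)\mathbf{1}_{\theta=-\lambda}$ with convergence uniform in $\theta$ on compact sets avoiding $-\lambda$. Approximating $f_{\mathcal{K}}$ by finite linear combinations of eigenfunctions in $L^2$ and applying the maximal inequality for $\sup_\theta|\tfrac{1}{T}\int_0^T(\cdot)e^{2i\pi\theta t}dt|$ (easily obtained from the Hardy-Littlewood maximal operator after freezing $\omega$), I would upgrade this to convergence for all $\theta$ on a single $\mu$-null set.

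The heart of the proof is the orthogonal part. I would apply Theorem \ref{vdrc2} with $u(t)=f_\perp(T_t\omega)e^{2i\pi\theta t}$, which yields
\begin{eqnarray*}
\limsup_{T\to\infty}\Bigl|\frac1{T}\int_0^T f_\perp(T_t\omega)e^{2i\pi\theta t}dt\Bigr|^2
\leq \limsup_{S\to\infty}\frac1{S^2}\int_0^S\!\!\int_0^S e^{2i\pi\theta(s'-s)}F(s'-s,\omega)\,ds\,ds',
\end{eqnarray*}
where $F(h,\omega)=\lim_{T\to\infty}\frac1T\int_0^T f_\perp(T_{t+h}\omega)\overline{f_\perp(T_t\omega)}\,dt$. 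By Birkhoff's theorem applied to each $h$ in a countable dense subset of $\R$ and $L^2$-continuity of the flow, I can choose a single null set outside of which $F(\cdot,\omega)$ exists for all $h\in\R$, is continuous and positive definite, and therefore equals $\hat{\nu}_\omega$ for a unique finite positive Borel measure $\nu_\omega$ on $\R$ (Bochner). The right-hand side above then rewrites as $\int|\tfrac1S\int_0^S e^{2i\pi(\lambda+\theta)s}ds|^2\,d\nu_\omega(\lambda)$, which tends to $\nu_\omega(\{-\theta\})$ by bounded convergence.

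The conclusion therefore reduces to showing that, for a.e. $\omega$, the measure $\nu_\omega$ has \emph{no} atoms simultaneously for all $\theta\in\R$. Integrating Bochner's formula against $\mu$ yields $\int \nu_\omega\,d\mu(\omega)=\sigma_{f_\perp}$, which is continuous by construction; a disintegration/Fubini argument applied to the atomic part of $\nu_\omega$ (viewed as a measurable map into the space of finite measures on $\R$ with the weak-$\ast$ topology) then forces $\nu_\omega$ to be continuous for $\mu$-a.e. $\omega$. This last measurability-and-disintegration step is the main obstacle, since we need continuity of $\nu_\omega$ for a single full-measure set of $\omega$ rather than the immediate ``for each $\theta$, a.e. $\omega$'' statement; once it is secured, the van der Corput bound gives $\lim_T\frac1T\int_0^T f_\perp(T_t\omega)e^{2i\pi\theta t}dt=0$ for \emph{every} $\theta$ outside the chosen null set, and adding the Kronecker contribution completes the proof.
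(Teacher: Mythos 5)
Your skeleton --- reduction to a dense class, Kronecker/orthocomplement splitting, van der Corput applied to $u(t)=f_\perp(T_t\omega)e^{2i\pi\theta t}$, and identification of the resulting bound with the mass of an atom of a limiting measure --- is essentially the paper's strategy. But there is a genuine gap at precisely the step you flag as the main obstacle, and the resolution you propose does not work. From $\int\nu_\omega\,d\mu(\omega)=\sigma_{f_\perp}$ and continuity of $\sigma_{f_\perp}$ you can only conclude that for each \emph{fixed} $\lambda$ one has $\nu_\omega(\{\lambda\})=0$ for a.e.\ $\omega$; no Fubini or disintegration argument upgrades this to ``for a.e.\ $\omega$, the measure $\nu_\omega$ has no atoms at all,'' because a family of purely atomic measures can perfectly well integrate to a continuous one (take $\nu_\omega=\delta_{g(\omega)}$ with $g$ a random variable of continuous distribution). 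So, as written, the orthogonal part is not established for all $\theta$ simultaneously off a single null set, which is the whole point of the theorem.

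The paper closes exactly this gap by reducing to \emph{ergodic} $\mu$ at the outset, via the ergodic decomposition. Under ergodicity the correlation limit $F(h,\omega)=\lim_{\tau\to\infty}\frac1\tau\int_0^\tau f(T_{t+h}\omega)\overline{f}(T_t\omega)\,dt$ is not a random object: by the pointwise ergodic theorem in the uniform-in-$h$ form of Theorem \ref{wwhm} and Corollary \ref{wwcor}, it equals the deterministic quantity $\langle f\circ T_h,f\rangle=\widehat{\sigma_f}(-h)$ for all $h$ off a single null set, so your $\nu_\omega$ is just $\sigma_f$ for a.e.\ $\omega$; its atoms sit exactly at the eigenfrequencies, which are countable and are killed by the orthogonality to the Kronecker factor (this is the content of the paper's Theorem \ref{Sww}, convergence to $0$ whenever $e^{2\pi i\theta}\notin e(T)$). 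If you insert the ergodic reduction at the start, the rest of your argument goes through, and your explicit treatment of the Kronecker part (with the maximal inequality uniform in $\theta$) is actually more complete than what the paper writes down. Note also that your passage from countably many $h$ to all $h$ in the definition of $F(\cdot,\omega)$ needs the uniformity supplied by Theorem \ref{wwhm}, not just $L^2$-continuity of the flow, since equicontinuity of the averages in $h$ is itself an a.e.\ statement for each pair of parameters.
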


We will assume without loss of generality  that $\mu$ ergodic. Indeed, otherwise, on can use the ergodic decomposition of $\mu$. So, it is sufficient to prove the following : \\
 
 \begin{thm}\label{Sww}
  For any $f$ in $L^2(\mu)$, there is a set 
 $\Omega'$ of full measure such that the sums 
 $\displaystyle \lim_{T \rightarrow +\infty} \frac1{T}\int_{0}^{T} f(T_t\omega) e^{2 i \pi \theta t} dt$
 converge to 0 for all $\theta$ in $\R$, where $e^{2 \pi i \theta} \not \in
 e(T)$ and $\omega \in \Omega'$. $e(T)$ stand for the set of eigenvalue of the Koopman operator $U_T\; :\; g \mapsto g \circ T.$
 \end{thm}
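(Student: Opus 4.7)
The plan is to apply the limit van der Corput inequality directly to $u(t) = f(T_t\omega)e^{2\pi i\theta t}$ and to identify the resulting bound with a spectral quantity that vanishes under the hypothesis on $\theta$. Remarkably, no preliminary decomposition of $f$ into eigenfunction and continuous-spectrum components is needed: the whole argument runs on the raw spectral measure $\sigma_f$ of the flow.

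Since $u(t+s'-s)\overline{u(t)} = e^{2\pi i\theta(s'-s)}f(T_{t+s'-s}\omega)\overline{f(T_t\omega)}$, Theorem \ref{vdrc2} yields
\begin{eqnarray*}
\limsup_{T\to\infty}\Big|\frac1T\int_0^T f(T_t\omega)e^{2\pi i\theta t}\,dt\Big|^2 \leq \limsup_{S\to\infty}\frac1{S^2}\int_0^S\int_0^S e^{2\pi i\theta(s'-s)}\,L_\omega(s'-s)\,ds\,ds',
\end{eqnarray*}
where $L_\omega(r) := \lim_{T\to\infty}\frac1T\int_0^T f(T_{t+r}\omega)\overline{f(T_t\omega)}\,dt$. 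Birkhoff's pointwise ergodic theorem, applied to the ergodic flow $\{T_t\}$ and to the $L^1(\mu)$-function $\omega\mapsto f(T_r\omega)\overline{f(\omega)}$, shows that for each fixed $r$ this limit exists and equals $\langle U_{T_r}f,f\rangle$ for $\omega$ outside a $\mu$-null set. A Fubini argument then delivers a single full-measure $\Omega'\subset\Omega$, independent of $\theta$, on which $L_\omega(r) = \langle U_{T_r}f,f\rangle$ for Lebesgue-almost every $r$.

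Fix $\omega\in\Omega'$. Inserting the spectral identity $\langle U_{T_r}f,f\rangle = \int_\R e^{2\pi i\lambda r}\,d\sigma_f(\lambda)$ and swapping orders of integration (legal since $\sigma_f$ is a finite positive measure and the kernel is bounded) converts the right-hand side into
\begin{eqnarray*}
\lim_{S\to\infty}\int_\R\Big|\frac1S\int_0^S e^{2\pi i(\theta+\lambda)s}\,ds\Big|^2\,d\sigma_f(\lambda).
\end{eqnarray*}
The integrand is bounded by $1$ and converges pointwise to $\mathbf{1}_{\{\lambda=-\theta\}}$ as $S\to\infty$, so dominated convergence evaluates the limit to $\sigma_f(\{-\theta\})$. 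Because the spectrum of a unitary operator is closed under complex conjugation, the hypothesis $e^{2\pi i\theta}\notin e(T)$ implies that $-\theta$ is not an eigenvalue of the flow; hence $\sigma_f$ has no atom at $-\theta$ for any $f\in L^2(\mu)$, and therefore $\sigma_f(\{-\theta\}) = 0$, which is the desired vanishing.

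The main obstacle is to exhibit a single null set that works for every $\theta$ simultaneously, since a naive pointwise ergodic argument would produce one exceptional set per $\theta$ and the uncountable union could be too large to discard. The structure above resolves this cleanly at two stages: the Birkhoff--Fubini step selects a null set depending only on $f$ and the flow, and the spectral reduction replaces the residual $\theta$-dependence by the single quantity $\sigma_f(\{-\theta\})$, whose vanishing follows from the hypothesis on $\theta$ alone, without any further dependence on $\omega$. The only mild technicality is justifying the Fubini interchange when $L_\omega(r)$ is defined only for a.e.\ $r$, which is handled by a uniform Cauchy--Schwarz bound on the autocorrelation together with Birkhoff applied to $|f|^2$, giving $\frac1T\int_0^T|f(T_t\omega)|^2\,dt\to\|f\|_2^2$ on $\Omega'$.
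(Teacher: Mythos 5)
Your proof is correct, and its overall skeleton --- van der Corput applied to $u(t)=f(T_t\omega)e^{2\pi i\theta t}$, passage to the spectral measure $\sigma_f$, the Fej\'er-type computation $\frac{1}{S^2}\big|\int_0^S e^{2\pi i(\theta+\lambda)s}ds\big|^2\to \mathbf{1}_{\{\lambda=-\theta\}}$, and dominated convergence yielding $\sigma_f(\{-\theta\})=0$ --- coincides with the paper's. The genuine difference is in how the single exceptional null set is produced. The paper invokes Theorem \ref{wwhm} and Corollary \ref{wwcor}, i.e.\ a uniform-in-$s$ convergence of the time autocorrelation $\frac1\tau\int_0^\tau f(T_{t+s}\omega)\overline{f}(T_t\omega)\,dt$ valid for \emph{every} $s\in\R$ off one null set; this is itself a nontrivial Wiener--Wintner-type statement which the paper states without proof. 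You instead apply plain Birkhoff to $\omega\mapsto f(T_r\omega)\overline{f}(\omega)$ for each fixed $r$ and then use Fubini on $\Omega\times\R$ to get a single full-measure set on which the autocorrelation converges for Lebesgue-a.e.\ $r$ --- and you correctly observe that this weaker conclusion suffices, since in the van der Corput bound the parameter $r=s'-s$ only ever appears under a Lebesgue integral over $[0,S]^2$, so a null set of bad $r$'s contributes nothing (given the uniform Cauchy--Schwarz domination you supply via Birkhoff for $|f|^2$). This buys a more elementary and self-contained argument that also works directly for $f\in L^2$ rather than first restricting to $L^\infty$; what it gives up is the stronger intermediate statement (convergence for all $s$) that Corollary \ref{wwcor} provides, which is not needed here. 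The remaining points (joint measurability of the autocorrelation for the Fubini step, and the sign observation that $-\theta$ fails to be an eigenfrequency whenever $e^{2\pi i\theta}\notin e(T)$ because conjugating an eigenfunction negates its frequency) are standard and adequately indicated.
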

\noindent{}Before proceeding to the proof of Theorem \ref{Sww}, let us notice  that it suffices to prove it for a dense class of functions ($L^2$ functions for instance). Indeed,
Put $$R(\omega,f)=\limsup_{T \longrightarrow +\infty}\Big|\ds \int_{0}^{T}f(T_t(\omega)) e^{2 \pi i t \theta}dt\Big|,$$
and assume that $g$ in the dense class for which theorem holds. Then 
$$R(\omega,f)=R(\omega,f-g).$$
and then
$$\mu\{\omega:R(\omega,f-g)>\epsilon\}\leq \frac{||f-g||_1}{\epsilon}.$$
\noindent{}We thus get by the density of $L^2(\mu)$ in $L^1(\mu)$, that there
exist $g$ in $L^2(\mu)$ such that : $||f-g||_1<{\epsilon}^2$. Then
$$\mu\{\omega:R(\omega,f-g)>\epsilon\}\leq\epsilon.$$
\noindent{}Since $\epsilon$ is arbitrary, we see $R(\omega,f)=0$ a.e., where
the null set excluded is independent of $z$.\\

\noindent{}We start by recalling that by Bochner theorem, for any $f \in L^2(X)$, there exists a unique finite Borel measure $\sigma_f$ on $\R$ such that
\[\widehat{\sigma_f}(t)=\ds \int_{\R} e^{-it\xi}\ d\sigma_f(\xi)=\langle U_tf,f \rangle=
\ds \int_{\Omega} f \circ T_t(\omega)\cdot \overline{f}(\omega) \ d\mu(\omega).\]
$\sigma_f$ is called the \emph{spectral measure} of $f$. If $f$ is eigenfunction with eigenfrequency $\lambda$ then
the spectral measure is the Dirac measure at $\lambda$.\\

\noindent{}We need also the following fundamental results.\\

\begin{thm}\label{wwhm} Let  $(\Omega,{\mathcal{A}},\mu,(T_t)_{t \in \R})$ be an \emph{ergodic} dynamical flow. Then,  for any $S>0$ and all $f,g\in  L^2(X)$, for almost all $\omega \in \Omega$, we have
	$$\lim_{\tau \to +\infty}\frac1{\tau}\int_{0}^{\tau}f(T_{t+s}\omega)\cdot g(T_t\omega)\,dt=\int_{\Omega}f\circ T_s\cdot g\,d\mu$$
	uniformly for $s$ in the interval $[-S,S]$.
\end{thm}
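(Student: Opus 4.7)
\begin{proofof}{Plan for Theorem \ref{wwhm}}
The strategy is to combine the pointwise Birkhoff ergodic theorem for flows (applied to each fixed value of $s$) with a continuity-in-$s$ argument that exploits the strong $L^2$-continuity of the flow $t\mapsto U_t$. Since $[-S,S]$ is compact and $s\mapsto f\circ T_s$ is (uniformly) continuous in $L^2(\mu)$, we can reduce the uniform statement on $[-S,S]$ to pointwise convergence on a finite $\delta$-net and an equicontinuity-type estimate for the ergodic averages.

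First I would fix, for each $s\in\R$, the function $h_s(\omega)=f(T_s\omega)\,g(\omega)\in L^1(\mu)$. By the pointwise ergodic theorem for the flow and the assumption of ergodicity,
\[
A_\tau(s,\omega):=\frac1\tau\int_0^\tau f(T_{t+s}\omega)\,g(T_t\omega)\,dt
=\frac1\tau\int_0^\tau h_s(T_t\omega)\,dt\;\longrightarrow\;\int_\Omega h_s\,d\mu
=\int_\Omega f\circ T_s\cdot g\,d\mu
\]
for $\mu$-a.e.\ $\omega$. Choose a countable dense set $D\subset[-S,S]$ and let $\Omega_0$ be the full-measure set on which this convergence holds simultaneously for every $s\in D$, and on which $\frac1\tau\int_0^\tau|g(T_t\omega)|^2\,dt\to\|g\|_2^2$ and $\frac1\tau\int_0^\tau|f(T_{t+s_i}\omega)-f(T_{t+s_j}\omega)|^2\,dt\to\|f\circ T_{s_i}-f\circ T_{s_j}\|_2^2$ for every pair $s_i,s_j\in D$.

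Next I would prove the equicontinuity step. By Cauchy--Schwarz, for $s_1,s_2\in[-S,S]$,
\[
|A_\tau(s_1,\omega)-A_\tau(s_2,\omega)|\le
\left(\frac1\tau\int_0^\tau|f(T_{t+s_1}\omega)-f(T_{t+s_2}\omega)|^2\,dt\right)^{1/2}
\left(\frac1\tau\int_0^\tau|g(T_t\omega)|^2\,dt\right)^{1/2}.
\]
For $\omega\in\Omega_0$ and $s_1,s_2\in D$, the right-hand side tends to $\|f\circ T_{s_1}-f\circ T_{s_2}\|_2\,\|g\|_2$, which is small once $|s_1-s_2|$ is small by strong continuity of the one-parameter group $U_t$ on $L^2(\mu)$; uniform continuity on $[-S,S]$ gives a single modulus. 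A parallel estimate shows that the limit function $s\mapsto\int f\circ T_s\cdot g\,d\mu=\langle U_sf,\bar g\rangle$ is continuous on $[-S,S]$.

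Finally, given $\varepsilon>0$, pick a finite $\delta$-net $\{s_1,\dots,s_N\}\subset D$ in $[-S,S]$ with $\delta$ so small that $\|f\circ T_{s}-f\circ T_{s'}\|_2<\varepsilon$ whenever $|s-s'|<\delta$. For $\omega\in\Omega_0$, choose $\tau_0(\omega)$ so large that the Birkhoff averages at each $s_k$ are within $\varepsilon$ of their limit and the Cauchy--Schwarz bound above is within $\varepsilon$ of $\|f\circ T_{s_k}-f\circ T_{s_j}\|_2\,\|g\|_2$ for all $k,j$. For arbitrary $s\in[-S,S]$, pick $s_k$ with $|s-s_k|<\delta$ and combine the three estimates (pointwise convergence at $s_k$, the equicontinuity bound between $s$ and $s_k$, and continuity of the limit) to conclude $|A_\tau(s,\omega)-\int f\circ T_s\cdot g\,d\mu|\lesssim\varepsilon(1+\|g\|_2)$ uniformly in $s\in[-S,S]$.

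The only delicate point is guaranteeing that all almost-sure statements hold on a single null set: this is handled by running Birkhoff only over the countable family of functions indexed by $D$ (together with $|g|^2$ and the pairwise differences), and then transferring to arbitrary $s$ via the equicontinuity estimate. Aside from that bookkeeping, the argument is a standard Arzel\`a--Ascoli-type passage from pointwise to uniform convergence on a compact parameter set, so I do not expect serious obstacles.
\end{proofof}
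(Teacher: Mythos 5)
The paper states Theorem \ref{wwhm} without any proof (it is invoked as a known ``fundamental result''), so there is no argument of the author's to compare yours against; judged on its own merits, your plan has a genuine gap at the step where you ``transfer to arbitrary $s$ via the equicontinuity estimate.'' The Cauchy--Schwarz bound
$$|A_\tau(s,\omega)-A_\tau(s_k,\omega)|\le\Big(\frac1\tau\int_0^\tau\big|(f\circ T_{s}-f\circ T_{s_k})(T_t\omega)\big|^2\,dt\Big)^{1/2}\Big(\frac1\tau\int_0^\tau|g(T_t\omega)|^2\,dt\Big)^{1/2}$$
is valid pointwise, but to make its first factor small you need the orbit average of $|f\circ T_s-f\circ T_{s_k}|^2$ to be close to $\|f\circ T_s-f\circ T_{s_k}\|_2^2$. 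Birkhoff gives that only off a null set depending on $s$, and your $\Omega_0$ only absorbs the pairs from the countable set $D$. For $s\notin D$ and your fixed $\omega\in\Omega_0$ there is no control at all on $\frac1\tau\int_0^\tau|(f\circ T_s-f\circ T_{s_k})(T_t\omega)|^2\,dt$: a finite orbit average does not see the $L^2(\mu)$-norm of the uncountably many functions $f\circ T_s-f\circ T_{s_k}$. What you actually need is that $\limsup_{\tau}\sup_{|u|\le\delta}\frac1\tau\int_0^\tau|f(T_{t+u}\omega)-f(T_t\omega)|^2\,dt$ is small for small $\delta$, with the supremum over the uncountable parameter \emph{inside} the $\limsup$; this ``local uniform'' statement does not follow from the countably many Birkhoff applications you list. (Nor does it help that $s\mapsto A_\tau(s,\omega)$ is continuous for each fixed $\tau$: pointwise convergence of continuous functions on a dense set gives nothing uniform without equicontinuity, which is precisely what is unproved.)

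The standard repair keeps your dense-class philosophy but picks the class and the error estimate so that the error is controlled uniformly in $s$ by a \emph{single} Birkhoff application. First note the $s$-uniform inequality $\frac1\tau\int_0^\tau|h(T_{t+s}\omega)|^2\,dt\le\frac1\tau\int_{-S}^{\tau+S}|h(T_t\omega)|^2\,dt$ for all $|s|\le S$ (just enlarge the domain of integration), whose $\limsup_\tau$ equals $\|h\|_2^2$ almost everywhere by one application of the (two-sided) ergodic theorem to $|h|^2$. Taking $h=f-f_\epsilon$ with $f_\epsilon=\frac1\epsilon\int_0^\epsilon f\circ T_u\,du$, which tends to $f$ in $L^2$ by strong continuity, this reduces the theorem --- uniformly over $s\in[-S,S]$ --- to the mollified function $f_\epsilon$. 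For $f_\epsilon$ one has the identity $A_\tau^{f_\epsilon}(s,\omega)=\frac1\epsilon\int_s^{s+\epsilon}A_\tau^{f}(v,\omega)\,dv$, and the required uniform convergence of these $v$-averages follows from Fubini (almost-everywhere convergence in $(v,\omega)$) together with the domination $|A_\tau^f(v,\omega)|\le\big(\frac1\tau\int_{-S-\epsilon}^{\tau+S+\epsilon}|f(T_t\omega)|^2dt\big)^{1/2}\big(\frac1\tau\int_0^\tau|g(T_t\omega)|^2dt\big)^{1/2}$. Without some such mollification along the flow (or an equivalent maximal-inequality argument), the passage from your countable net to all of $[-S,S]$ does not close.
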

\noindent This yields the exact result need it.
\begin{Cor} \label{wwcor}Let $f\in L^2(\mu)$. There exist a full measure subset $\Omega_f$ of $\Omega$ such that, for any $\omega\in \Omega_f$ and any $s\in\R$, we have
	$$
	\lim_{\tau\to\infty} \frac1{\tau}\int_0^{\tau} f(T_{t+s}\omega)\cdot \overline{f}(T_t\omega)\, dt=\int_X f\circ T_s \cdot \overline{f}\,d\mu.
	$$
\end{Cor}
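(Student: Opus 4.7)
The plan is to deduce the corollary from Theorem~\ref{wwhm} by a standard countable-exhaustion argument, exploiting the uniformity of the convergence on each bounded interval.

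First I would fix $f \in L^2(\mu)$ and apply Theorem~\ref{wwhm} with $g = \overline{f}$ and with the parameter $S = n$ for each positive integer $n$. This yields, for every $n \in \N$, a full measure set $\Omega_n \subset \Omega$ such that for every $\omega \in \Omega_n$ one has
$$\lim_{\tau \to +\infty}\frac{1}{\tau}\int_{0}^{\tau} f(T_{t+s}\omega)\cdot \overline{f}(T_t\omega)\,dt = \int_{\Omega} f\circ T_s \cdot \overline{f}\,d\mu$$
uniformly for $s \in [-n,n]$.

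Next I would set $\Omega_f := \bigcap_{n \geq 1} \Omega_n$. Since this is a countable intersection of sets of full measure, $\mu(\Omega_f) = 1$. For any $\omega \in \Omega_f$ and any $s \in \R$, I simply choose an integer $n$ with $n \geq |s|$; then $s \in [-n,n]$ and, as $\omega \in \Omega_n$, the uniform convergence on $[-n,n]$ specializes to the pointwise convergence at $s$. This gives the desired equality for every $s \in \R$ on the single null set complement.

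There is no real obstacle here: the only subtlety is to notice that the null set in Theorem~\ref{wwhm} a priori depends on $S$, so one must invoke the theorem for a countable exhausting sequence of values $S = n$ rather than once for all $s \in \R$. The uniformity provided by Theorem~\ref{wwhm} on each compact interval is precisely what permits removing the dependence of the null set on the parameter $s$.
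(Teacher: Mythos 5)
Your argument is correct and is precisely the deduction the paper intends: the paper offers no explicit proof, simply asserting that Theorem~\ref{wwhm} ``yields the exact result,'' and your countable exhaustion over $S=n$ with $g=\overline{f}$ is the standard way to remove the dependence of the null set on $S$. Nothing further is needed.
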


\noindent\begin{proof}[ \textbf{of Theorem} \ref{Sww}] Let $f$ in $L^\infty(\mu)$ and $\omega \in \Omega_f$ as in Corollary \ref{wwcor}, then we have 
 \begin{eqnarray}
 	\lim_{\tau\to\infty} \frac1{\tau}\ds \int_0^{\tau} f(T_{t+s}\omega)\cdot \overline{f}(T_t\omega)\, dt&=\ds \int_X f\circ T_s \cdot \overline{f}\,d\mu\\
 	&\stackrel {\rm def }{=}<f \circ T_s,f>.
 \end{eqnarray}
 Put $$u(t)=f(T_t\omega) e^{2  \pi i t \theta},$$ 
\noindent and apply further van der Corput's 
 inequality (Theorem \ref{vdrc1}) to get
 \begin{eqnarray}
 &\Big|\frac{1}{\tau}\displaystyle \int_{0}^{\tau}f(T_t\omega) e^{2  \pi i t \theta} dt\Big|^2 \nonumber\\&\leq 
 \frac{S+\tau}{\tau S^2}\displaystyle\int_{0}^{S} \displaystyle\int_{0}^{S} e^{2 \pi i (s-s')\theta}\frac{1}{\tau}\int_{0}^{\tau} f(T_{t+s-s'}) \overline{f}(T_t\omega)   dt ds ds'.	
 \end{eqnarray}
 \noindent{}We thus deduce that for almost all $\omega$ and all $\theta \in
 {\R}$, we have 
 \begin{eqnarray}
 &{\limsup_{\tau \rightarrow \infty}} \Big|\frac{1}{\tau}\displaystyle \int_{0}^{\tau}f(T_t\omega) e^{2  \pi i t \theta} dt\Big|^2 \nonumber\\
 &\leq \frac{1}{S^2}\displaystyle\int_{0}^{S} \displaystyle\int_{0}^{S}e^{2 \pi i (s-s')\theta}\Big({\lim_{\tau \rightarrow \infty}}\frac{1}{\tau}\int_{0}^{\tau} f(T_{t+s-s'}) \overline{f}(T_t\omega) dt\Big) ds ds'.	
 \end{eqnarray}

 \noindent{}This combined with Corollary \ref{wwcor} gives
 \begin{eqnarray}
 &{\limsup_{\tau \rightarrow \infty}} \Big|\frac{1}{\tau}\displaystyle \int_{0}^{\tau}f(T_t\omega) e^{2  \pi i t \theta} dt\Big|^2 \nonumber\\
 &\leq \frac{1}{S^2}\displaystyle\int_{0}^{S} \displaystyle\int_{0}^{S} \Big(
 \ds \int_{\R} e^{2 \pi i (s-s')(\theta-\gamma)} d\sigma_f(s-s')\Big) ds ds',	
 \end{eqnarray}
 where $\sigma_f$ stand for the spectral measure of $f$. But, since 
 \begin{eqnarray}
  \frac{1}{S^2} 
 \displaystyle\int_{0}^{S} \displaystyle\int_{0}^{S}  e^{2 \pi i (s-s')(\theta-\gamma)} ds ds'
 =\Big|\frac{1}{S} 
 \displaystyle\int_{0}^{S} e^{2 \pi i s(\theta-\gamma)} ds\Big|^2 
 \end{eqnarray}
 \noindent{}if $\theta \neq \gamma$, we have
\begin{eqnarray}
\lim_{S \rightarrow \infty}\frac{1}{S^2} 
\displaystyle\int_{0}^{S} \displaystyle\int_{0}^{S}  e^{2 \pi i (s-s')(\theta-\gamma)} ds ds'
=0. 
\end{eqnarray}
 \noindent{}Whence, if $e^{2 \pi i \theta}$ is not a eigenvalue of $(T_t)$, we have
 $$ \lim_{S \rightarrow \infty}\frac{1}{S^2}\displaystyle\int_{0}^{S} \displaystyle\int_{0}^{S} \Big(
 \ds \int_{\R} e^{2 \pi i (s-s')(\theta-\gamma)} d\sigma_f(s-s')\Big) ds ds'=0.$$
 \noindent{}Since all the sums are bounded, we deduce from Lebesgue theorem that 
 for almost all $\omega$, and for all $\theta$ in ${\R}$, where $e^{2 \pi i \theta} \not \in
 e(T)$,
 $${\lim_{\tau \rightarrow \infty}}\frac{1}{\tau}\displaystyle \int_{0}^{\tau}f(T_t\omega) e^{2  \pi i t \theta} dt=0,$$
 \noindent{}and this finish the proof of the theorem.
 \end{proof}

\section*{2. Joining's proof of Wiener-Wintner Theorem for action of amenable group}\label{Jww}

In this section, we deal with actions on Lebesgue spaces, that is, given a 
locally compact groupe $G$ and the a Lebesgue space $(X, {\cal {A}}, \mu)$, a 
$G-${\it {action}} is a measurable mapping $G \times X \rightarrow X$, 
$(g,x) \mapsto g.x$, such that for all $g,h \in G$, $g.(h.x)=(gh).x$ and 
$e.x=x$ for almost all $x \in X$ ( where $e$ is the identity in $G$). 
Furthermore, $T_g ~:~ x \mapsto g.x$ is measure -preserving for every $g \in G$. 
We will mainly concerned with $G$ which is amenable group ( locally compact
second countable) or the subclass of locally compact abelian groups. \\
We recall that  $G$ is an amenable group if for
any compact $K \subset G$ and $\delta > 0$ there is a compact set $F \subset G$ such that
\begin{eqnarray}\label{Folner}
\textrm{h}_L(F \Delta KF) < \delta \textrm{h}_L (F),
\end{eqnarray}
where $\textrm{h}_L$ stand for the left Haar measure on $G$. It is well known that the amenability is equivalent to the existence of F\o{}lner sequence $(F_n)$, that is, $(F_n)$ is a sequence of
compact subsets of $G$ for which for every
compact $K$ and $\delta > 0$, for all large enough $n$ we have that $F_n$ satisfy (\ref{Folner}). Assume further that $(F_n)$ satisfy the so-called Shulman Condition ,that is, for some $C > 0$ and all $n$
\begin{eqnarray}\label{Shulman}
\textrm{h}_L\Big(\bigcup_{k \leq n }F_k^{-1}F_n\Big) \leq C .\textrm{h}_L\Big(F_n\Big). 
\end{eqnarray}
Under this assumptions, E. Lindenstrauss proved that the Birkhoff pointwise ergodic theorem holds, that is, Then for any
$f \in L^1(\mu)$, there is a $G$-invariant $f^*\in L^1 (\mu)$ such that
$$
\lim\frac{1}{\textrm{h}_L\big(F_n\big)}\ds \int_{F_n} f(g\omega) d{\textrm{h}}_L(g) = f^*(\omega) \; \; \textrm{a.e.}.$$

To formulate the $G$-version of Wiener-Wintner theorem, we replace the group rotations by homomorphisms $\Theta$ from $G$ to a finite dimensional unitary group $U_d$. The  canonical action in this case is given by $g.u=\Theta(g).u,$  $u \in U_d$ and $g \in G$. The invariant measure is the Haar measure on $U_d$. In this setting, we formulate the  Wiener-Wintner theorem as follows:
\begin{thm}[Group version of  Wiener-Wintner theorem]\label{Gww}
Let $G$ be an amenable group acting on a Lebesgue space $(\Omega, \mathcal{A}, \mu)$ and assume that $G$ satisfy Shulman condition. Let $f \in L^\infty(\mu)$. Then, there is a set $\Omega_f$ of full measure such for any $\omega \in \Omega_f$
$$
\frac{1}{\textrm{h}_L\big(F_n\big)}\ds \int_{F_n} f(g\omega)\phi(\Theta(a)u) d{\textrm{h}}_L(g)  \; \; $$
converge for all finite dimensional unitary representation $\Theta$ of $G$ into $U_d$ (all $d$), all continuous function $\phi$ on $U_d$ and all $u \in U_d$. We further have that the limit on the orthocomplement of the space spanned by the finite dimensional invariant subspaces is zero.
\end{thm}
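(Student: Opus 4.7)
The plan is to imitate Furstenberg's joining proof in the classical setting, substituting Lindenstrauss's pointwise ergodic theorem for Birkhoff's and using finite-dimensional unitary representations of $G$ in place of the characters used in the $\Z$- or $\R$-case. The first move is to decompose $L^2(\mu) = H_c \oplus H_{wm}$, where $H_c$ is the closed linear span of the finite-dimensional $G$-invariant subspaces of $L^2(\mu)$ and $H_{wm} := H_c^{\perp}$; this is the amenable analog of the Kronecker/weak-mixing decomposition. I then write $f = f_c + f_{wm}$ and handle the two pieces separately: the $H_c$ piece delivers existence of the limit and the $H_{wm}$ piece delivers the zero limit, thereby proving both assertions of the theorem simultaneously.

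\textbf{The $H_c$ piece.} Suppose $f$ lies in a finite-dimensional $G$-invariant subspace $V$ carrying a unitary representation $\rho$. Then for each $\omega$, the function $g \mapsto f(g\omega)$ is a matrix coefficient of $\rho$, so $g \mapsto f(g\omega)\phi(\Theta(g)u)$ is a matrix coefficient of the finite-dimensional tensor representation $\rho \otimes \Theta$. Applying the mean ergodic theorem on the amenable group $G$ to this finite-dimensional representation yields convergence of $\frac{1}{\textrm{h}_L(F_n)}\int_{F_n}(\rho\otimes\Theta)(g)\,d\textrm{h}_L(g)$ to the projection onto invariant vectors, and once $\omega$ is fixed this convergence is uniform in $(\Theta,\phi,u)$. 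Since $L^2(\mu)$ is separable, $H_c$ is exhausted by countably many such subspaces and I obtain a single full-measure set on which the limit exists for $f_c$.

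\textbf{The $H_{wm}$ piece via joining.} For the second piece I form the skew product $\Omega \times U_d$ equipped with $\mu \otimes m_{U_d}$ and the $G$-action $g\cdot(\omega,u) = (g\omega,\Theta(g)u)$, which preserves $\mu \otimes m_{U_d}$ and inherits Shulman's condition along $(F_n)$. Setting $F(\omega,u) := f_{wm}(\omega)\phi(u)$, the average in question equals the Lindenstrauss average of $F$ along the orbit of $(\omega,u)$; Lindenstrauss's theorem gives $\mu \otimes m_{U_d}$-a.e. convergence to $\E(F \mid {\cal I})$, where ${\cal I}$ denotes the $G$-invariant $\sigma$-algebra. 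Peter-Weyl on the compact group $U_d$ shows that every ${\cal I}$-measurable function decomposes as a sum over irreducible representations $\pi$ of $U_d$ of matrix coefficients of maps $A_{\pi} : \Omega \to \textrm{End}(V_{\pi})$ intertwining the $G$-action on $\Omega$ with $\pi \circ \Theta$; the entries of each $A_{\pi}$ then lie in finite-dimensional $G$-invariant subspaces of $L^2(\mu)$, hence in $H_c$. Since $f_{wm} \perp H_c$, it follows that $\E(F \mid {\cal I}) = 0$, so the averages converge to zero almost surely.

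\textbf{Uniformity and the main obstacle.} The last and subtlest step is to upgrade the various a.e. statements to a single null set $\Omega_f$ valid for every $\Theta$, every continuous $\phi$ on $U_d$ and every $u \in U_d$. Fubini converts a.e. convergence on $\Omega \times U_d$ to a.e. convergence in $\omega$ valid for a.e. $u$; uniform boundedness of the averages combined with equicontinuity of $u \mapsto \phi(\Theta(g)u)$ (uniform in $g$ since $U_d$ is compact) pushes the "a.e. $u$" statement to "every $u$". A separability reduction over a countable family of irreducible unitary representations of $G$ up to equivalence, a countable dense subset of $C(U_d)$ for each $d$, and a countable dense subset of each $U_d$, consolidates the countably many full-measure sets into the required $\Omega_f$. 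I expect the main obstacle to be the combination of the joining step and this uniformity step: carrying out the Peter-Weyl identification of ${\cal I}$-measurable functions in enough detail to really conclude that they are carried by $H_c$, and then arranging the separability reduction cleanly enough to control every unitary representation of every dimension through a single null set.
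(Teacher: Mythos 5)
Your decomposition $L^2(\mu)=H_c\oplus H_{wm}$ and your treatment of the $H_c$ piece are essentially sound (modulo the maximal inequality needed to pass from the dense union of finite-dimensional invariant subspaces to all of $H_c$, which Lindenstrauss's theorem does supply under the Shulman condition), and your Peter--Weyl identification of the invariant $\sigma$-algebra of the skew product is correct and is in substance the paper's Lemma \ref{owlemma}. The genuine gap is in the uniformity step for the $H_{wm}$ piece, and it sits exactly where the entire difficulty of any Wiener--Wintner theorem lives. Applying Lindenstrauss's pointwise theorem to $F(\omega,u)=f_{wm}(\omega)\phi(u)$ on $\Omega\times U_d$ produces a null set that depends on the action, hence on $\Theta$. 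Your proposed consolidation ``over a countable family of irreducible unitary representations of $G$ up to equivalence'' is not available: already for $G=\Z$ or $G=\R$ the set of one-dimensional unitary representations is uncountable (the circle, the line), and no continuity-in-$\Theta$ argument can substitute for countability, because the limit of the averages is not continuous in the representation (for $G=\Z$, $\frac1N\sum_{n<N}e^{2\pi i n\theta}$ tends to $0$ for every $\theta\neq0$ but not uniformly near $\theta=0$). As written, your argument proves only that for each fixed $\Theta$ there is a full-measure set, which is the easy half of the statement.

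The paper closes this gap by reversing the order of quantifiers, and this is the idea your proposal is missing. One fixes a single $\Theta$-independent full-measure set of $\omega$, namely the generic points obtained by applying the pointwise ergodic theorem on $\Omega$ alone to a countable dense family of functions. For such an $\omega$ and an arbitrary $\Theta$, every weak-$*$ cluster point $\lambda$ of the empirical averages on $U\times\Omega$ (with $U$ the closure of $\Theta(G)$) is an invariant measure projecting onto Haar measure and onto $\mu$, i.e.\ a joining; Lemma \ref{owlemma} (or, in the abelian case, the Hahn--Parry disjointness Corollary \ref{Csdisjoint}) forces $\lambda$ to be the product measure whenever $\Theta$ lies outside the \emph{countable} list of finite-dimensional representations actually occurring in $L^2(\mu)$ --- it is the countability of that list, not of all representations of $G$, that saves the day. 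Since every subsequential limit is the same, the full sequence of averages converges, with a null set chosen before $\Theta$ is ever seen. Your invariant-$\sigma$-algebra computation already contains the proof of the disjointness lemma, so the repair is structural rather than computational: replace the a.e.\ statement on the product system by a genericity-plus-unique-joining statement.
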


Before proceeding to the proof let us recall some important tools.\\
  
A {\it {joining}} of two actions of the same group 
${\cal {X}}=(X,{\cal {A}},\mu,G)$  and  ${\cal {Y}}=(Y,{\cal {B}},\nu,G)$  
is the probability measure $\lambda$ on $(X \times Y,
{\cal {A}} \times {\cal {B}})$ which is invariant under the diagonal action
of $G$ $(g.(x,y)=(g.x,g.y))$ and whose marginals on $({\cal {A}} \times Y)$ and
$(X \times {\cal {B}})$ are $\mu$ and $\nu$ respectively (i.e. if $A \in
{\cal {A}}$, $\lambda(A \times Y)=\mu(A)$; and if $B \in {\cal {B}},
~\lambda(X \times B)=\nu(B)).$ The set of joinings is never empty. (Take $\mu
\times \nu$). As we deal with Lebesgue spaces, a joining $\lambda$ of two
ergodic $G$-actions ${\cal {X}}$ and ${\cal {Y}}$ has the property that there
exists a Lebesgue space $\Omega$ and the probability $\P$ on $\Omega$ such
that $\lambda=\int \lambda_{\omega} d\P(\omega)$, where $\lambda_{\omega}$ is
ergodic. (This is just the ergodic decomposition of $\lambda$, and as the
marginals of $\lambda$ are ergodic a.e., $\lambda_{\omega}$ is joining). 
Therefore the set of ergodic joinings is never empty.{\footnote {see [D-R],
for instance.}}\\

Historically, joinings were introduced by H. Furstenberg in his paper \cite{F2} on
disjointness. In particular, he defined the important notion of disjointness
for $\Z$-{\it {action}} in the following way :  $(X,{\cal {A}},\mu,T)$ and
$(Y,{\cal {B}},\nu,S)$ is {\it disjoint} if the only joining between them is
the product joining. In the case of $G$-{\it {action}} we have the following 
definition.\\

\begin{defn}
Let ${\cal {X}}$  and ${\cal {Y}}$ be 
two actions of the same group $G$.  ${\cal {X}}$  and ${\cal {Y}}$ are {\it disjoint} if the only joining between them is the product joining. We denote this disjointness by ${\cal {X}} \perp {\cal 
	{Y}}$.
\end{defn} 
\noindent In the case of $\Z$-actions, H. Hahn \& W. Parry obtain in \cite{H-P} that if two transformations
have mutually singular maximal types, then they are disjoint. But, 
As for the joinings theory, the spectral theory of $\Z$-actions can be extended to the case 
of locally abelien $G-$actions. Therefore, we have the following group version of Hahn-Parry theorem.
\\

\begin{thm}[(Hahn \& Parry]
If two $G$-actions ${\cal {X}}$ and ${\cal {Y}}$
have mutually singular maximal spectral types, then  they are
disjoint.
\end{thm}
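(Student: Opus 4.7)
The plan is to mimic the classical Hahn--Parry argument for $\Z$-actions, transported to the Pontryagin-dual spectral picture via the SNAG theorem on $\widehat G$. Without loss of generality I may assume $\mathcal X$ and $\mathcal Y$ are ergodic, the general case following by ergodic decomposition of the joining and of the marginals.

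Let $\lambda$ be an arbitrary joining of $\mathcal X$ and $\mathcal Y$. I first introduce the conditional expectation operator
\[
 P : L^2(\mu) \longrightarrow L^2(\nu), \qquad Pf(y) \;=\; \E_\lambda\bigl[\, f\otimes 1 \,\big|\, 1\otimes \mathcal B \,\bigr](y),
\]
which is a contraction. A short computation using the diagonal $G$-invariance of $\lambda$ yields the key intertwining identity $P\,U^{T}_g = U^{S}_g\, P$ for every $g\in G$, where $U^{T}$, $U^{S}$ denote the Koopman representations of $\mathcal X$ and $\mathcal Y$.

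By the SNAG theorem the two Koopman representations are encoded by projection-valued measures $E^{T}, E^{S}$ on $\widehat G$. Since $P$ intertwines $U^{T}_g$ with $U^{S}_g$ at every $g$, Borel functional calculus forces $P\,E^{T}(A) = E^{S}(A)\,P$ for every Borel $A\subseteq\widehat G$. The chain $\sigma_f^{T}(A)=0 \Rightarrow E^{T}(A)f=0 \Rightarrow E^{S}(A)Pf=P E^{T}(A)f = 0 \Rightarrow \sigma_{Pf}^{S}(A)=0$ then shows that the spectral measure of $Pf$ in $L^2(\nu)$ is absolutely continuous with respect to that of $f$ in $L^2(\mu)$; in particular $\sigma_{Pf}^{S} \ll \sigma_{\max}^{T}$. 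But $Pf$ lives in $L^2(\nu)$, so $\sigma_{Pf}^{S} \ll \sigma_{\max}^{S}$ as well. The hypothesis of mutually singular maximal spectral types then forces $\sigma_{Pf}^{S}$ to be concentrated at the trivial character $\mathbf 1\in\widehat G$; ergodicity of $\nu$ identifies the $\mathbf 1$-eigenspace with the constants, so $Pf=\int f\,d\mu$. Unwinding the definition of $P$, this is the factorisation $\int f(x)\,h(y)\,d\lambda = \int f\,d\mu\cdot\int h\,d\nu$ for all $f\in L^2(\mu)$, $h\in L^2(\nu)$, i.e.\ $\lambda=\mu\times\nu$.

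The main obstacle I anticipate is the careful handling of the trivial character $\mathbf 1\in\widehat G$, which always belongs to both maximal spectral types and would obstruct any naive appeal to mutual singularity: the hypothesis must be read on the reduced subspaces $L^2_0(\mu)$, $L^2_0(\nu)$, and ergodicity is precisely what identifies the $\mathbf 1$-eigenspace in $L^2(\nu)$ with the constants. The remaining technicality --- invoking the SNAG theorem and the measurable functional calculus for a possibly non-metrizable locally compact abelian $G$, and justifying that $P$ sends the whole functional calculus of $U^{T}$ to that of $U^{S}$ --- is classical.
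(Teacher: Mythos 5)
Your proof is correct and is essentially the paper's argument (which follows Thouvenot): the paper phrases it as the projection of $1_X\otimes f_2$ onto the cyclic subspace $H_{f_1}\subset L^2(\lambda)$ generated by $\bigl(f_1-\int f_1\,d\mu\bigr)\otimes 1_Y$, whose spectral measure is absolutely continuous with respect to both maximal spectral types and hence vanishes, while you express the very same orthogonality through the intertwining Markov operator $Pf=\E_\lambda\bigl[f\otimes 1\mid 1\otimes {\cal B}\bigr]$ and the SNAG projection-valued measures. Your explicit handling of the atom at the trivial character and of the role of ergodicity is a welcome precision that the paper's terse proof leaves implicit.
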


\begin{proof}
Let recall that the spectral measure of a function $f 
\in L^2(X)$ under the operators $U_g$ ( defined on $L^2(X)$ by $U_g(f)=
f\circ T_g$) is the measure $\sigma_f$ on $\displaystyle\stackrel
{\wedge }{G}$ (dual group of $G$, i.e., the set of
all continuous characters of $G$) where its Fourier transform $\displaystyle\stackrel
{\wedge }{\sigma_f}$
is given by $\displaystyle\stackrel{\wedge }{\sigma_f}(g)=<U_gf,f)$. Now, we 
follows the proof given in \cite{Th}. In $X \times Y$ endowed with a joining 
measure $\lambda$, consider $f_1 \in L^2(X)$ and $f_2 \in L^2(Y)$ and 
consider $H_{f_1}$ the $L^2(\lambda)$ closure of the linear span of the
functions $(U_g(f_1)-\int f_1 d \mu ) \times 1_Y$, $g \in G$. The projection of $1_X\times f_2$ on
$H_{f_1}$ will have a spectral measure absolutely continuous with respect to 
the spectral type of $U_g$ on $L^2(X)$ and thus has to be $0$. Therefore 
$1_X\times f_2 \perp (f_1-\int f_1)\times 1_Y$, and $\int f_1(x) f_2(y)
d\lambda(x,y)=\int f_1 d \mu\int f_2d\nu$.
\end{proof} 

\noindent From this theorem we have the following.\\

\begin{Cor}\label{Csdisjoint}
Let $\chi_0$ be a non trivial character and define the
action of $G$ on torus $\T$ by
$(g,e^{ix}) \mapsto \chi_0(g) e^{ix}$. Assume that for any $n \in \Z$,
the character $\chi_0^n$ define on $G$ by $g \mapsto \chi_0(g^n)$ is not
eigenvalue of the $G$-action on ${\cal {X}}$. The the $G$-action on $\T$
and the $G$-action on ${\cal {X}}$ are disjoint.
\end{Cor}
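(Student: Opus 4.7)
The plan is to reduce to the Hahn--Parry theorem just proved, so it suffices to check that the maximal spectral types of the two $G$-actions are mutually singular on $\widehat{G}$.

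I would first identify the maximal spectral type of the $G$-action on $\T$ equipped with Haar measure. The family $\{e^{inx}\}_{n\in\Z}$ is an orthonormal basis of $L^2(\T)$, and the action sends $e^{inx}$ to $\chi_0(g)^n\,e^{inx}=\chi_0^n(g)\,e^{inx}$. Hence this action has pure point spectrum, and its maximal spectral type on the orthocomplement of the constants is a purely atomic measure supported on the countable set
\[
E \stackrel{\rm def}{=} \bigl\{\chi_0^n\,:\,n \in \Z \setminus \{0\}\bigr\}\subset \widehat{G}.
\]

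Next I would let $\sigma$ denote the maximal spectral type of the $G$-action on $\cal X$, taken on the orthocomplement of the constants in $L^2(\mu)$. The atoms of $\sigma$ are exactly the non-trivial eigencharacters of the Koopman representation $(U_g)_{g\in G}$. By hypothesis no $\chi_0^n$ with $n\neq 0$ is such an eigencharacter, so $\sigma(\{\chi_0^n\})=0$ for every $n\neq 0$, and consequently $\sigma(E)=0$ since $E$ is countable. Thus the maximal spectral type on $\T$, concentrated on $E$, and $\sigma$, which vanishes on $E$, are mutually singular, and the Hahn--Parry theorem yields ${\cal X}\perp \T$.

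The only mild subtlety is the trivial character $\chi_0^0\equiv 1$: it is always an eigencharacter via the constants on both sides, so the hypothesis as literally stated would force $n\neq 0$. This is harmlessly absorbed into the standard convention of working with spectral types on the orthocomplement of the constants, which is the correct space for checking disjointness since a joining is entirely determined by how it couples zero-mean functions. Once this convention is fixed, the argument is essentially the observation that a purely atomic measure is singular with respect to any measure that assigns zero mass to each of its atoms, and I expect no serious obstacle beyond this bookkeeping.
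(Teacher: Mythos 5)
Your proposal is correct and follows the same route as the paper: identify the maximal spectral type of the torus action as a purely atomic measure on $\{\chi_0^n\}$, observe that the hypothesis forces the spectral type of ${\cal X}$ to give zero mass to these atoms, conclude mutual singularity, and invoke the Hahn--Parry theorem. You are merely more explicit than the paper about the decomposition of $L^2(\T)$ into the characters $e^{inx}$ and about the bookkeeping for the trivial character, which the paper leaves implicit.
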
 
\begin{proof}
Let recall that $\chi_0$ is a eigenvalue of $G$-
action if there exist a eigenfunction $f \in L^2(X,\mu)$ such that 
$f\circ T_g =\chi_0(g)~f.$ We deduce that the spectral measure of $f$ is 
$||f||_2^2\delta_{\chi_0}$ ($\delta_{\chi_0}$ is the Dirac measure on 
$\chi_0$). Since for any $n \in \Z$, $\chi_0^n$ is not eigenvalue of $G$-action
on ${\cal {X}}$, we conclude that the maximal spectral types of this two $G$-actions 
are mutually singular. Now apply the Hahn-Parry theorem to complete the proof.
\end{proof} \\

\noindent For the general case of amenable group which satisfy Shulman condition, we have the following lemma from \cite{OW}.
\begin{lemm}\label{owlemma} Let $U$ be the closure of $\Theta(G)$ in $U_d$. Then, if the product
	$(U,\Theta,G) \times (\Omega,\mathbf{A}, \mu, G)$ is ergodic then there is only on $G$-invariant measure on $U \times \Omega$ that projects onto $\mu$ on $\Omega$.
\end{lemm}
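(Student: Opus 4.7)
The plan is to exploit the commutation between the diagonal $G$-action on $U\times \Omega$ and the right translation of $U$ on the first coordinate. For $v\in U$, define $R_v(u,\omega):=(uv,\omega)$. The associativity identity $\Theta(g)(uv)=(\Theta(g)u)v$ in $U_d$ shows that $R_v$ commutes with every $T_g$, and $R_v$ leaves the $\Omega$-marginal unchanged. Consequently, for any $G$-invariant probability $\lambda$ on $U\times \Omega$ with $\Omega$-marginal $\mu$, the pushforward $(R_v)_*\lambda$ still lies in the convex set $K$ of such invariant measures, for every $v\in U$.

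Given $\lambda\in K$, I would symmetrize it by forming
$$
\tilde\lambda \,:=\, \int_U (R_v)_*\lambda \, dm_U(v),
$$
where $m_U$ is the Haar probability on the compact group $U$. A short Fubini-type computation using the left invariance of $m_U$ (namely $\int_U f(uv,\omega)\,dm_U(v)=\int_U f(w,\omega)\,dm_U(w)$ for fixed $u$) shows that $\int f\,d\tilde\lambda = \int f\,d(m_U\times \mu)$ for every bounded measurable $f$, and hence $\tilde\lambda = m_U\times \mu$. Equivalently, $\tilde\lambda$ is now invariant under $R_v$ for every $v$, so its disintegration over $\mu$ produces right-$U$-invariant conditional probabilities on $U$, each of which must be $m_U$.

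The ergodicity hypothesis enters decisively at the final step. Since $m_U\times\mu$ is $G$-ergodic, it is an extreme point of $K$. The identity $\tilde\lambda=m_U\times\mu$ exhibits this extreme point as the barycenter of the $K$-valued map $v\mapsto (R_v)_*\lambda$, so the standard fact that an ergodic invariant measure admits only the trivial ergodic decomposition forces $(R_v)_*\lambda = m_U\times\mu$ for $m_U$-almost every $v\in U$. Picking one such $v$ and applying $(R_{v^{-1}})_*$, together with the bi-invariance of Haar measure on $U$, one concludes $\lambda = m_U\times \mu$, which is the asserted uniqueness.

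The main obstacle I anticipate is the extremality step: to invoke the ``extreme point implies trivial decomposition'' principle for an integral, not a finite convex combination, one must verify that $v\mapsto (R_v)_*\lambda$ is weakly-$*$ measurable as a map into the compact convex set $K$ and that $\tilde\lambda$ is genuinely its barycenter in the Choquet sense. These measurability and barycentric-integration issues are standard but do need to be addressed; once this is done, the whole argument rests on the algebraic identity between left and right translation in $U_d$ and the bi-invariance of Haar measure on the compact subgroup $U$.
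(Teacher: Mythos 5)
Your proof is correct. Note first that the paper itself gives no argument for Lemma \ref{owlemma}: it is stated and imported from \cite{OW}, so there is no internal proof to compare against. Your Haar-averaging argument is the classical one (it goes back to Furstenberg's uniqueness lemma for compact group extensions) and all the steps check out: $R_v$ commutes with the diagonal $G$-action because $G$ acts on the first coordinate by \emph{left} translation by $\Theta(g)$; the Fubini computation correctly identifies $\tilde\lambda$ with $m_U\times\mu$ using only that the $\Omega$-marginal of $\lambda$ is $\mu$; and the ergodicity hypothesis, which is precisely the ergodicity of $m_U\times\mu$, makes that measure an extreme point of the compact convex set $K$, so its barycentric representation over $v\mapsto (R_v)_*\lambda$ must be trivial. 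One remark on the step you flag as the main obstacle: you can bypass Bauer/Choquet barycenter technology entirely by first reducing to $\lambda$ ergodic (via the ergodic decomposition of $\lambda$, whose components still project to $\mu$ since $\mu$ is ergodic); then each $(R_v)_*\lambda$ is ergodic, distinct ergodic invariant measures are mutually singular, and an ergodic measure such as $m_U\times\mu$ cannot be an average of invariant measures singular to it on a set of positive $m_U$-measure. This gives the same conclusion $(R_v)_*\lambda=m_U\times\mu$ for $m_U$-a.e.\ $v$ with only the measurability of $v\mapsto\int f\,d(R_v)_*\lambda$ (immediate from Fubini) needing verification, after which right-invariance of Haar measure on the compact group $U$ finishes the proof exactly as you say.
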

\begin{proof}[ \textbf{of Theorem \ref{Gww}} ] We start by assuming without lost of generality that the action on $(\Omega,\mathbf{A}, \mu, )$ is ergodic and by presenting the proof for the case when $G$ is locally abelien group. Let $f \in L^\infty(\mu)$ and $\phi$ continuous function. Then, by the pointwise theorem there is a set of full measure of $\omega$. Assume that $\omega$ is in this subset and let $\chi_0 \in \hat{G}$ such $\chi_0$ is not eigenvalue. Then, the product $(U,\Theta,G) \times (\Omega,\mathbf{A}, \mu, G)$ is ergodic. Moreover, by taking a subsequence $(n_k)$, we can assume that 
$$
\lim_{k \longrightarrow +\infty}\frac{1}{\textrm{h}_L\big(F_n\big)}\ds \int_{F_{n_k}} f(g\omega)\phi(\Theta(a)u) d{\textrm{h}}_L(g)=\lambda(f \otimes \phi). $$
It follows that $\lambda$ is a joining and by Corollary \ref{Csdisjoint} $\lambda=dh \times \mu$. We end the proof by noticing that there is a countable of eigenvalue.  The general case follows in the same manner by taking  
$$F(\omega)=\int \psi(u) I(u_1\omega) du,$$
where $I$ is a bounded invariant functions on $U \times \Omega$ and $\psi$ is any positive continuous function on $u$. Therefore, transforming $F$ by $g$ is the same as transforming $\psi$ by $\Theta(g)$. We thus have that a non-constant $I$ will give rise to finite dimensional invariant subspaces for $G$ on $\Omega$. Moreover, by taking $(U,\Theta,G)$ not in in the list of countable representations $(U_j,\Theta_j,G)$, the condition of Lemma \ref{owlemma} is satisfied and therefore as before the only joining is the product measure, and we are done. 
\end{proof}
\begin{que}We ask on the possible extension of van der Corput inequality to the locally compact group and its application to obtain produce a direct proof of the group version of Wiener-Wintner theorem.   
\end{que}
\textbf{Acknowledgment.}

The author would like to express his thanks to Jean-Paul Thouvenot and E. Lesigne for a discussion on the subject. 

\end{document}